\documentclass[12pt,american]{article}
\usepackage[T1]{fontenc}
\usepackage[latin9]{inputenc}
\synctex=-1
\usepackage{babel}
\usepackage{amstext}
\usepackage{amsthm}
\usepackage{amssymb}
\usepackage[unicode=true,
 bookmarks=true,bookmarksnumbered=true,bookmarksopen=false,
 breaklinks=true,pdfborder={0 0 1},backref=false,colorlinks=true]
 {hyperref}
\hypersetup{pdftitle={Green Measures for Markov Processes},
 pdfauthor={Yu. G. Kondratiev and J. L. da silva},
 pdfsubject={Green measures},
 pdfkeywords={Green measures, Markov processes, additive functionals},
 linkcolor=black,citecolor=black,urlcolor=black,filecolor=black}

\makeatletter
\theoremstyle{plain}
\newtheorem{thm}{\protect\theoremname}[section]
\theoremstyle{definition}
\newtheorem{defn}[thm]{\protect\definitionname}
\theoremstyle{plain}
\newtheorem{prop}[thm]{\protect\propositionname}
\theoremstyle{remark}
\newtheorem{rem}[thm]{\protect\remarkname}


\usepackage{babel}
\usepackage{mathrsfs}
\usepackage[intlimits]{amsmath}


\usepackage{babel}
\usepackage{mathrsfs}
\usepackage{bbm}

\numberwithin{equation}{section}
\newcommand\R{{\mathbb R}}
\newcommand\X{{\R^d}}
\renewcommand{\G}{\mathcal G}
\newcommand\la{\lambda}
\newcommand\B{{\mathcal B}}
\renewcommand\L{{\mathcal L}}

\makeatother

\providecommand{\definitionname}{Definition}
\providecommand{\propositionname}{Proposition}
\providecommand{\remarkname}{Remark}
\providecommand{\theoremname}{Theorem}

\begin{document}
\title{Green Measures for Markov Processes}
\author{\textbf{Yuri Kondratiev}\\
 Department of Mathematics, University of Bielefeld, \\
 D-33615 Bielefeld, Germany,\\
 Dragomanov University, Kiev, Ukraine\\
 Email: kondrat@mathematik.uni-bielefeld.de\\
 Email: kondrat@math.uni-bielefeld.de\and\textbf{ Jos{\'e} L.~da
Silva},\\
 CIMA, University of Madeira, Campus da Penteada,\\
 9020-105 Funchal, Portugal.\\
 Email: joses@staff.uma.pt}
\date{\today}

\maketitle
 
\begin{abstract}
In this paper we study Green measures of certain classes of Markov
processes. In particular Brownian motion and processes with jump generators
with different tails. The Green measures are represented as a sum
of a singular and a regular part given in terms of the jump generator.
The main technical question is to find a bound for the regular
part.

{\em Keywords}: Markov processes, Green measures, compound Poisson process, Brownian motion.

{\em AMS Subject Classification 2010}: 47D07, 37P30, 60J65, 60G55.
\end{abstract}
\tableofcontents{}

\section{Introduction}

Let $X(t), t\geq 0$ be a time homogeneous Markov process in $\X$ starting from the point $x\in\X$. 
For a function $f:\X\to \R$ we consider the following heuristic object
$$
V(f,x)= \int_0^\infty E^x[f(X(t)] dt.
$$
If this quantity exists then $V(f,x)$ is called the potential for the function $f$.
The notions of potentials is well known in probability theory, see e.g., \cite{Blumenthal1968,Revuz-Yor-94}.

The existence of the potential $V(f,x) $ is a difficult question and the class
of admissible $f$ shall be analyzed for each process separately. 
An alternative approach is based on the use of the generator $L$
of the process $X$.  Namely, the potential $V(f,x) $ may be constructed
as the solution to the following equation:
$$
-LV=f.
$$
Of course, there appear a technical problem of the characterization of
the domain for the inverse generator $L^{-1}$.  For a general Markov process
we can not characterize this domain. 

 In the analogy with
the PDE framework, we would like to have a representation
$$
V(t,x)  =\int_{\X} f(y) \G(x,\mathrm{d}y),
$$
where $\G(x,dy)$ is a measure on $\X$.  This measure is nothing but the fundamental
solution to the considered equation and traditionally may be called the Green measure
for the operator $L$. Our aim is to study Green measures for certain classes of Markov processes.
We stress again  that the concepts of potentials and Green measures are related
in the same manner as the notions of solutions and fundamental solutions
to the corresponding equations.

In this paper we discuss general notion of Green measures for Markov processes and
consider particular examples of such processes for which the existence and properties of 
Green measures may be analyzed.

\section{General Framework}

We will consider time homogeneous Markov processes $X(t)$, $t\ge0$
in $\X$, see for example \cite{Blumenthal1968,Dynkin1965, Meyer1967, Revuz-Yor-94}. A standard way to define a Markov process is to
give the probability $P_{t}(x,B)$ of the transition from the point
$x\in\X$ to the Borel set $B\subset\X$ in the time $t>0$.
In some cases we may have 
\[
P_{t}(x,B)=\int_{B}p_{t}(x,y)\,\mathrm{d}y,
\]
where $p_{t}(x,y)$ is the density of the transition probability (heat
kernel), that is, $P_{t}(x,\mathrm{d}y)=p_{t}(x,y)\,\mathrm{d}y$.
The function 
\[
g(x,y):=\int_{0}^{\infty}p_{t}(x,y)\,\mathrm{d}t
\]
is called the Green function, although the integral here may diverge.
The existence of the Green function for a given process or transition
probability, even for simple classes of Markov processes, is not always
guaranteed, see examples below. Nevertheless, Green functions for
different classes of Markov processes are well known in probability
theory, see, e.g., \cite{CGL20,Grigoryan2018a} and references therein.

As an alternative we introduce the Green measure by 
\[
\G(x,\mathrm{d}y):=\int_{0}^{\infty}P_{t}(x,\mathrm{d}y)\,\mathrm{d}t,\quad x\in\X,
\]
assuming the existence of this object as a Radon measure on $\X$.
Then we would have $\G(x,\mathrm{d}y)=g(x,y)\,\mathrm{d}y$. The aim
of this paper is to show how to define and study Green measures for
certain particular Markov processes in $\X$.

Let us be more precise. We can start with a Markov semigroup $T(t),t\geq0$,
that is, a family of linear operators in a Banach space $E$. As $E$,
we may use bounded measurable functions $B(\X)$, bounded continuous
functions $C_{b}(\X)$ or Lebesgue spaces $L^{p}(\X)$, $p\ge1$ depending
on each particular case. This family of operators satisfy the following
properties:
\begin{enumerate}
\item[(i)] $T(t)\in\L(E),\;\;t\geq0$,
\item[(ii)] $T(0)=1$,
\item[(iii)] ${\displaystyle \lim_{t\to0^{+}}}T(t)f=f,\quad f\in E$,
\item[(iv)] $T(t+s)=T(t)T(s)$,
\item[(v)] $\forall f\geq0\;\;\;T(t)f\geq0$.
\end{enumerate}
The semigroup is conservative if
\begin{enumerate}
\item[(vi)] $T(t)1=1$.
\end{enumerate}
The semigroup $T(t)$, $t\ge0$ is associated with a Markov process
$\{X(t),t\geq0\;|\;P_{x},x\in\X\}$ if

\[
(T(t)f)(x)=E^{x}[f(X(t))]=\int_{\X}f(y)P_{t}(x,\mathrm{d}y),\quad f\in E.
\]
The transition probabilities may be constructed from the semigroup
by choosing $f=\mathbbm{1}_{A}$, $A\in\B(\X)$, that is,
\[
P_{t}(x,A)=(T(t)\mathbbm{1}_{A})(x).
\]
Now we introduce the resolvent of the Markov semigroup $T(t)$, $t\ge0$.
Let $\lambda>0$ be given. The $\lambda$-resolvent of the Markov
semigroup is the linear operator $R_{\lambda}:E\longrightarrow E$
defined by
\[
(R_{\lambda}f)(x):=\int_{0}^{\infty}e^{-\lambda t}(T(t)f)(x)\,\mathrm{d}t=\int_{0}^{\infty}e^{-\lambda t}\int_{\X}f(y)P_{t}(x,\mathrm{d}y)\,\mathrm{d}t,
\]
for any $f\in E$ and $x\in\X$. 

Denote by $\B_{b}(\X)$ the family of bounded Borel sets in $\X$
and $C_{0}(\X)$ the space of continuous functions with compact support.
\begin{defn}
The Green measure for a Markov process $X(t)$, $t\ge0$ with transition
probability $P_{t}(x,B)$ is defined by
\[
\G(x,B):=\int_{0}^{\infty}P_{t}(x,B)\,\mathrm{d}t,\;\;B\in\B_{b}(\X),
\]
or 
\[
\int_{\X}f(y)\G(x,dy)=\int_{0}^{\infty}f(y)P_{t}(x,dy)\,\mathrm{d}t,\;\;f\in C_{0}(\X)
\]
whenever these integrals exist.
\end{defn}

The Markov generator is a characteristic of a Markov semigroup. More
precisely, we have the following definition.
\begin{defn}
We set 
\[
D(L):=\left\{ f\in E\,\middle|\,\frac{T(t)f-f}{t}\;\mathrm{converges\;in}\;E\;\mathrm{when}\;t\to0^{+}\right\} 
\]
and for every $f\in D(L)$, $x\in\X$
\[
(Lf)(x):=\lim_{t\to0^{+}}\frac{(T(t)f)(x)-f(x)}{t}.
\]
Then $D(L)$ (the domain of $L$) is a linear subspace of $E$ and
${L:D(L)\longrightarrow E}$ is a linear operator called the generator
of the semigroup $T(t)$, $t\ge0$.

There are several known properties of the generator $L$ and and
we have the full description of the Markov generators via so-called
maximum principle, see, e.g., \cite{CGL20,Grigoryan2018a}.
\end{defn}

From the relation between semigroup and generator we have
\begin{equation}
E^{x}\left[\int_{0}^{\infty}f(X(t))\,\mathrm{d}t\right]=\int_{\X}f(y)\G(x,\mathrm{d}y)=-(L^{-1}f)(x)=\int_{0}^{\infty}(T(t)f)(x)\,\mathrm{d}t\label{FS}
\end{equation}
 for every $f\in C_{0}(\X)$. Because any Radon measure defines a
generalized function, then we may write 
\[
\G(x,\mathrm{d}y)=g(x,y)\,\mathrm{d}y,
\]
where $g(x,\cdot)\in D'(\X)$ is a positive generalized function for
all $x\in\X$. In view of (\ref{FS}) the Green measure is the fundamental
solution corresponding to the operator $L$. Note that the existence
and regularity of this fundamental solution produces a description
of admissible Markov processes for which the Green measure exists.
In Section \ref{sec:Particular-Models} we present some examples and
show the existence of the Green measure under the assumption $d\geq3$.
This moment is one more demonstration on the essential influence of
the dimension of the phase space on the properties of Markov processes.

\section{Jump Generators and Green Measures}

Let $a:\X\to\R$ be a fixed kernel with the following properties: 
\begin{enumerate}
\item Symmetric, $a(-x)=a(x)$, for every $x\in\X$.
\item Positive and bounded, $a\geq0$, $a\in C_{b}(\X)$. 
\item Integrable
\[
\int_{\X}a(y)\,\mathrm{d}y=1.
\]
\end{enumerate}
Consider the generator $L$ defined on $E$ (as mentioned above) by
\[
(Lf)(x)=\int_{\X}a(x-y)[f(y)-f(x)]\,\mathrm{d}y=(a*f)(x)-f(x),\quad x\in\X.
\]
In particular, $L^{\ast}=L$ in $L^{2}(\X)$ and $L$ is a bounded
linear operator in all $L^{p}(\X)$, $p\ge1$. We call this operator
the jump generator with jump kernel $a$. The corresponding Markov
process is of a pure jump type and is known in stochastic as compound
Poisson process, see \cite{Skorohod1991}.

Several analytic properties of the jump generator $L$ were studied
recently, see for example \cite{Grigoryan2018,Kondratiev2017,Kondratiev2018}.
Here we recall some of these properties necessary in what follows.

Because $L$ is a convolution operator, it is natural to apply Fourier
transform to study it. 

At first notice that, due to the symmetry of the kernel $a$, its
Fourier image is given by

\[
\hat{a}(k)=\int_{\X}e^{-i(k,y)}a(y)\,\mathrm{d}y=\int_{\X}\cos((k,y))a(y)\,\mathrm{d}y.
\]
Then, it is easy to see that
\[
\hat{a}(0)=1,\quad\hat{a}(k)<1,\;k\neq0,
\]
\[
\hat{a}(k)\to0,k\to\infty.
\]
On the other hand, the Fourier image $L$ is the multiplication operator
by 
\[
\hat{L}(k)=\hat{a}(k)-1
\]
that is the symbol of $L$. 

We make the following assumptions on the kernel $a$.
\begin{description}
\item [{(H)}] The jump kernel $a$ is such that $\hat{a}\in L^{1}(\X)$
and has finite second moment, that is,
\[
\int_{\X}|x|^{2}a(x)\,\mathrm{d}x<\infty.
\]
\end{description}
Denote by $\G_{\la}(x,y)$, $x,y\in\X$, $\la\in(0,\infty)$ the resolvent
kernel of $R_{\la}(L)=(\la-L)^{-1}$. This kernel admits the representation
\[
\G_{\la}(x,y)=\frac{1}{1+\la}\big(\delta(x-y)+G_{\la}(x-y)\big),\quad\la\in(0,\infty),
\]
with 
\begin{equation}
G_{\la}(x)=\sum_{k=1}^{\infty}\frac{a_{k}(x)}{(1+\la)^{k}},\label{eq:Green-kernel}
\end{equation}
where $a_{k}(x)=a^{\ast k}(x)$ is the $k$-fold convolution of the
kernel $a$. Notice that the resolvent kernel $\G_{\la}(x,y)$ has
a singular part, $\delta(x-y)$ and a regular part $G_{\la}(x-y)$.
The Green function, as a generalized function, has the form 
\[
\G_{0}(x)=\delta(x)+G_{0}(x).
\]

The transition probability density $p(t,x)$ in terms of Fourier transform
has representation 
\[
p(t,x)=\frac{1}{(2\pi)^{d}}\int_{\X}e^{i(k,x)+t(\hat{a}(k)-1)}\,\mathrm{d}k.
\]
and for the resolvent kernel 

\[
\G_{\la}(x,y)=-(L-\la)^{-1}(x,y),
\]
holds

\[
\G_{\la}(x-y)=\frac{1}{(2\pi)^{d}}\int_{\X}\frac{e^{i(k,x-y)}}{1-\hat{a}(k)+\la}\,\mathrm{d}k.
\]
For a regularization of the last expression we write 
\[
\frac{1}{1-\hat{a}(k)+\la}=\frac{1}{1+\la}+\frac{\hat{a}(k)}{(1+\la)(1-\hat{a}(k)+\la)}.
\]
Then for operators we have 
\[
\G_{\la}=\frac{1}{1+\la}+G_{\la}
\]
or in terms of kernels 
\[
G_{\la}(x-y)=\frac{1}{(2\pi)^{d}}\int_{\X}e^{i(k,x-y)}\frac{\hat{a}(k)}{(1+\la)(1-\hat{a}(k)+\la)}\,\mathrm{d}k.
\]

We summarize our considerations. The study the resolvent kernel (Green
kernel) is reduced to the analysis of the regular part $G_{0}(x)$,
that is, 
\[
G_{0}(x)=\sum_{k=1}^{\infty}{a_{k}(x)},\qquad a_{k}(x)=a^{\ast k}(x),
\]
where $a_{k}(x)$ is the $k$-fold convolution of the kernel $a$.
The Fourier representation for $G_{0}$ is given by

\[
G_{0}(x)=\frac{1}{(2\pi)^{d}}\int_{\X}e^{i(k,x)}\frac{\hat{a}(k)}{1-\hat{a}(k)}\,\mathrm{d}k.
\]
For $d\geq3$ this integral exists for all $x\in\X$ that follows
from the integrable singularity of $(1-\hat{a}(k))^{-1}$ at $k=0$.
The latter is the consequence of our assumptions on $a(x)$.

\section{Particular Models}

\label{sec:Particular-Models}The main technical question is to obtain
a bound for the $k$-fold convolution $a_{k}(x)$ in $k$ and $x$
together for the analysis of the properties of $G_{0}(x)$. From stochastic
point of view, $a_{k}(x)$ is the density of sum of $k$ i.i.d.~random
variables with distribution density $a(x)$. Unfortunately, we can
not find in the literature any general result in this direction. There
are several particular classes of jump kernels for which we shall
expect such kind of results, see \cite{Kondratiev2018}.
\begin{enumerate}
\item \emph{Exponential tails} or \emph{light tail}s. That is, the kernel
$a(x)$ satisfies the upper bound
\[
a(x)\leq Ce^{-\delta|x|},\quad\delta>0.
\]
\item \emph{Moderate tails}. In this case the asymptotic of $a(x)$ and
$x\to\infty$ is given by
\begin{equation}
a(x)\sim\frac{C}{|x|^{d+\gamma}},\quad\gamma>2.\label{eq:moderate-tails}
\end{equation}
\item Heavy tails. The kernel $a(x)$ has an asymptotic similar to \eqref{eq:moderate-tails}
with $\gamma\in(0,2)$, that is,
\[
a(x)\sim\frac{C}{|x|^{d+\gamma}},\quad\gamma\in(0,2).
\]
\end{enumerate}
In both cases the exponential tails and moderate tails, the kernel
$a(x)$ has second moment. On the other hand, the case of heavy tails
the second moment of $a(x)$ does not exists. 

Below we consider two examples of kernels $a(x)$ and show the bound
for the regular part of the resolvent kernel $G_{0}(x)$. 

\subsection{Gauss Kernels}

Assume that the jump kernels $a(x)$ has the following form:

\begin{equation}
a(x)=C\exp\left(-\frac{b|x|^{2}}{2}\right),\quad C,b>0.\label{G}
\end{equation}

\begin{prop}
If the jump kernel $a(x)$ be given by (\ref{G}) and $d\geq3$, then
holds 
\[
G_{0}(x)\leq C_{1}\exp\left(-\frac{b|x|^{2}}{4}\right).
\]
\end{prop}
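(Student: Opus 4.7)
The plan exploits the fact that convolutions of centered Gaussians remain centered Gaussians, so every term in the series (\ref{eq:Green-kernel}) at $\la=0$ is explicit, reducing the regular part $G_0(x)=\sum_{k\geq 1}a_k(x)$ to a concrete one-parameter sum.

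First I fix the normalization: the condition $\int_\X a(y)\,\mathrm{d}y=1$ forces $C=(b/(2\pi))^{d/2}$, so $a$ is the density of a centered Gaussian with covariance $b^{-1}I$, and the standard Gaussian convolution identity gives
\[
a_k(x)=\Bigl(\frac{b}{2\pi k}\Bigr)^{d/2}\exp\Bigl(-\frac{b|x|^2}{2k}\Bigr),\qquad k\geq 1.
\]
Since $\|a_k\|_\infty=(b/(2\pi k))^{d/2}$ is summable in $k$ for $d\geq 3$, the series defining $G_0(x)$ converges absolutely and pointwise, consistent with the Fourier-side argument for $G_0$ given at the end of the previous section.

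Next I split the sum at $k=2$ in order to peel off the Gaussian factor $\exp(-b|x|^2/4)$ from the leading terms. The $k=1$ contribution is bounded by $(b/(2\pi))^{d/2}\exp(-b|x|^2/4)$ using $\exp(-b|x|^2/2)\leq\exp(-b|x|^2/4)$, and the $k=2$ contribution is exactly $(b/(4\pi))^{d/2}\exp(-b|x|^2/4)$. What remains is the tail
\[
R(x)=\sum_{k=3}^{\infty}\Bigl(\frac{b}{2\pi k}\Bigr)^{d/2}\exp\Bigl(-\frac{b|x|^2}{2k}\Bigr),
\]
which has to be bounded by a constant multiple of $\exp(-b|x|^2/4)$.

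The hard step is controlling $R(x)$, and it is the main technical content of the proof. For every $k\geq 3$ the factor $\exp(-b|x|^2/(2k))$ decays strictly slower than the target Gaussian, so termwise comparison fails, and one must trade the prefactor $k^{-d/2}$ against the exponential. My plan is to introduce a cutoff $k_0=k_0(|x|)$ and write $\exp(-b|x|^2/(2k))=\exp(-b|x|^2/4)\exp(b|x|^2/(4k))$ for $3\leq k\leq k_0$, so that the bad multiplicative factor is small enough to be absorbed in the summability of $k^{-d/2}$, while for $k>k_0$ one bounds $\exp(-b|x|^2/(2k))\leq 1$ and relies on the fast convergence of $\sum k^{-d/2}$ (here $d\geq 3$ is crucial). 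Calibrating $k_0$ so that both partial sums are uniformly controlled by $\exp(-b|x|^2/4)$ is delicate; if the balance turns out to be tight for large $|x|$, I would fall back on treating $|x|$ bounded and $|x|$ large separately, using $G_0(0)=(b/(2\pi))^{d/2}\zeta(d/2)<\infty$ for the bounded regime and absorbing the transition into the constant $C_1$.
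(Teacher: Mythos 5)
Your computation of $a_k(x)=(b/(2\pi k))^{d/2}e^{-b|x|^2/(2k)}$ and the reduction to bounding the tail $R(x)=\sum_{k\ge 3}(b/(2\pi k))^{d/2}e^{-b|x|^2/(2k)}$ are correct, but the step you leave open --- calibrating a cutoff $k_0(|x|)$ so that $R(x)\le C e^{-b|x|^2/4}$ --- is not merely delicate: it cannot be carried out, because that inequality is false for large $|x|$. A single term already destroys it: taking $k=\lceil b|x|^2\rceil$ makes the exponent $b|x|^2/(2k)\le 1/2$, so $a_k(x)\ge c(b,d)\,|x|^{-d}$ for large $|x|$, hence $G_0(x)\ge c\,|x|^{-d}$; comparing the sum with the integral $\int_0^\infty t^{-d/2}e^{-b|x|^2/(2t)}\,\mathrm{d}t$ even gives $G_0(x)\asymp |x|^{2-d}$, the Newtonian-type decay one should expect for the Green function of a walk with Gaussian steps (and which matches the Fourier formula for $G_0$, since $1-\hat a(k)\sim c|k|^2$ at $k=0$). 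No choice of $k_0$ rescues the plan: for $k>k_0$ you bound the exponential by $1$ and are left with $\sum_{k>k_0}k^{-d/2}\asymp k_0^{1-d/2}$, which at the natural scale $k_0\sim |x|^2$ is again polynomial, not Gaussian; and your fallback of treating bounded $|x|$ separately does not help, since the obstruction sits precisely in the regime $|x|\to\infty$.

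For comparison, the paper's own proof takes a different route: it expands $e^{-b|x|^2/(2k)}$ in a power series, interchanges the two sums (legitimate, by absolute convergence for $d\ge 3$) to get $C\sum_{n\ge 0}\zeta(d/2+n)\frac{(-b|x|^2/2)^n}{n!}$, and then replaces each $\zeta(d/2+n)$ by $\zeta(3/2)$ so as to resum the exponential. That replacement is applied termwise to a series with alternating signs, so it does not produce an upper bound; this is exactly where the Gaussian decay is illegitimately introduced, and your more honest termwise analysis correctly detects that the factor $e^{-b|x|^2/4}$ cannot be extracted from the terms with $k\ge 3$. So the gap in your proposal is genuine, but it reflects a defect of the stated bound itself (what is actually provable is a bound of the form $G_0(x)\le C(1+|x|)^{2-d}$, or a Gaussian bound only for a truncated sum $\sum_{k\le K}a_k$ with constants depending on $K$), not an omission you could repair within your scheme.
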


\begin{proof}
By a direct calculation we find
\[
a_{k}(x)=\frac{C}{k^{d/2}}\exp\left(-\frac{b|x|^{2}}{2k}\right)
\]
with $C=C(b,d)$. Therefore for $d\geq3$ we obtain
\begin{eqnarray*}
G_{0}(x) & = & \sum_{k=1}^{\infty}{a_{k}(x)}=C\sum_{k=1}^{\infty}\frac{1}{k^{d/2}}\exp\left(-\frac{b|x|^{2}}{2k}\right)\\
 & = & C\sum_{k=1}^{\infty}\sum_{n=0}^{\infty}\frac{1}{k^{d/2}}\frac{1}{n!}\left(-\frac{b|x|^{2}}{2k}\right)^{n}\\
 & = & C\sum_{n=0}^{\infty}\left(\sum_{k=1}^{\infty}\frac{1}{k^{d/2+n}}\right)\frac{1}{n!}\left(-\frac{b|x|^{2}}{2}\right)^{n}.
\end{eqnarray*}
As the series $\sum_{k=1}^{\infty}\frac{1}{k^{d/2+n}}=\zeta(d/2+n)\le\zeta(3/2)$
for $d/2+n>1\Leftrightarrow d\ge3$, where $\zeta(s)$, $s>1$ is
the Riemann zeta function, then we obtain
\[
G_{0}(x)\le C\zeta(3/2)\sum_{n=0}^{\infty}\frac{1}{n!}\left(-\frac{b|x|^{2}}{2}\right)^{n}=C_{1}\exp\left(-\frac{b|x|^{2}}{2}\right).
\]
\end{proof}

\subsection{Exponential Tails}

Now we investigate the case when the jump kernel $a(x)$ has exponential
tails, that is, 
\begin{equation}
a(x)\leq C\exp(-\delta|x|),\quad\delta>0.\label{exp}
\end{equation}

\begin{prop}
If the jump kernel $a(x)$ satisfies (\ref{exp}) and $d\geq3$, then
there exist $A,B>0$ such that the bound for $G_{0}(x)$ holds 
\[
G_{0}(x)\leq A\exp(-B|x|).
\]
\end{prop}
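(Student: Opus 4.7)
The plan is to mimic Proposition~4.1: establish a pointwise bound on $a_{k}=a^{*k}$ that is jointly summable in $k$ and exponentially decaying in $|x|$, then sum. Unlike the Gauss case no closed form for $a_{k}$ is available, so the estimate must be obtained indirectly.

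My first step is an exponential tilt. Fix $B\in(0,\delta)$ and set $\tilde{a}(x):=e^{B|x|}a(x)$; by (\ref{exp}), $\tilde{a}\in C_{b}(\X)\cap L^{1}(\X)$. The triangle inequality $|x|\le|y_{1}|+|y_{2}-y_{1}|+\cdots+|x-y_{k-1}|$ applied inside the iterated integral defining $a_{k}$ yields
\[
a_{k}(x)\,e^{B|x|}\le\tilde{a}^{*k}(x),
\]
so the task reduces to bounding $\tilde{a}^{*k}(x)$ uniformly in $x$ by a sequence summable in $k$. The crude Young estimate $\|\tilde{a}^{*k}\|_{\infty}\le\|\tilde{a}\|_{\infty}M^{k-1}$ with $M:=\int_{\X}\tilde{a}(x)\,\mathrm{d}x>1$ is not summable; this is the main obstacle.

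To overcome it, I would invoke hypothesis (H). Since $a$ has exponential tails, $\hat{a}$ extends analytically to the strip $|\mathrm{Im}\,\xi|<\delta$; expressing $a_{k}$ as the inverse Fourier transform of $\hat{a}^{k}$ on the shifted contour $\xi\mapsto\xi+iB\omega$ (with $\omega=x/|x|$) produces the prefactor $e^{-B|x|}$, while a local-CLT-type estimate---using the Gaussian behavior of $\hat{a}$ near $\xi=0$ (finite second moment from (H)) together with Riemann--Lebesgue decay at infinity ($\hat{a}\in L^{1}$)---supplies an additional factor $Ck^{-d/2}$. Since $d\ge3$ ensures $\sum_{k}k^{-d/2}<\infty$, summation then yields $G_{0}(x)\le Ae^{-B|x|}$. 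The principal technical difficulty is that on the shifted contour $|\hat{a}(\xi+iB\omega)|$ need not be uniformly $\le1$; a careful choice of $B\ll\delta$, combined with a quadratic analysis near $\xi=0$ (exploiting the symmetry of $a$) and a uniform large-$|\xi|$ decay bound, is required to preserve the $k^{-d/2}$ estimate after raising to the $k$th power and integrating.
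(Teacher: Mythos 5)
Your key intermediate claim --- a bound $a_{k}(x)\le Ck^{-d/2}e^{-B|x|}$ with $C,B$ independent of $k$ --- is exactly where the proposal breaks, and it is false. After shifting the contour to $\xi+iB\omega$, the integrand at $\xi=0$ is $\hat a(iB\omega)^{k}=M_{B}^{k}$ with $M_{B}=\int_{\X}e^{B(\omega,y)}a(y)\,\mathrm{d}y=\int_{\X}\cosh(B(\omega,y))\,a(y)\,\mathrm{d}y>1$ for every $B>0$ (by the symmetry of $a$), so no choice of $B\in(0,\delta)$ makes $|\hat a(\cdot+iB\omega)|\le1$ near the origin; the geometric factor $M_{B}^{k}$ is unavoidable, and it is the same obstacle you already identified in the crude Young bound, merely reappearing on the Fourier side. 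Carrying your scheme through and optimizing $B$ separately for each pair $(k,x)$ (a Chernoff/Legendre argument) yields $a_{k}(x)\le Ck^{-d/2}\exp(-c\min(|x|,|x|^{2}/k))$ --- Gaussian decay in the CLT regime $|x|\lesssim k$, genuinely exponential only when $|x|\gtrsim k$ --- and nothing stronger is possible: under (H) a local CLT gives $a_{k}(x)\ge ck^{-d/2}$ for $|x|\le\sqrt{k}$, so a $k$-uniform bound $Ck^{-d/2}e^{-B|x|}$ is contradicted at $k\asymp|x|^{2}$; indeed those values of $k$ alone contribute order $|x|^{2-d}$ to $G_{0}(x)$.

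For comparison, the paper does not attempt your uniform bound: it quotes from \cite{Kondratiev2018} precisely the min-type estimate above, splits $\exp(-c\min(|x|,|x|^{2}/n))\le e^{-c|x|}+e^{-c|x|^{2}/n}$, and sums in $n$ using $d\ge3$. Be aware, though, that the obstruction you met does not disappear there either: $\sum_{n}n^{-d/2}e^{-c|x|}=\zeta(d/2)e^{-c|x|}$ is fine, but the step $\sum_{n}n^{-d/2}e^{-c|x|^{2}/n}\le C_{2}e^{-c_{2}|x|^{2}}$ in the paper's proof cannot hold as stated: the single term $n\asymp|x|^{2}$ is already of order $|x|^{-d}$, and the full sum is of order $|x|^{2-d}$ (compare the Brownian Green function in the next subsection). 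The honest output of this line of argument is therefore $G_{0}(x)\le C\bigl(e^{-c|x|}+|x|^{2-d}\bigr)$, i.e.\ polynomial rather than exponential decay at infinity. So your instinct that a stronger, $k$-uniform exponential bound would be needed to reach $Ae^{-B|x|}$ was sound --- but such a bound is not available, and the exponential conclusion itself should be treated with suspicion for $\lambda=0$ (exponential decay is what one gets for the resolvent kernels $G_{\lambda}$ with $\lambda>0$, which is the setting of \cite{Kondratiev2018}).
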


\begin{proof}
It was shown in \cite{Kondratiev2018} that 
\[
a_{n}(x)\leq Cn^{-d/2}\exp(-c\min(|x|,|x|^{2}/n)).
\]
This implies the following bound for $a_{n}(x)$ 
\[
a_{n}(x)\leq Cn^{-d/2}\big(\exp(-c|x|)+\exp(-c|x|^{2}/n)\big).
\]
Hence, it is simple to obtain the bound for $G_{0}(x)$, namely for
$d\ge3$
\begin{eqnarray*}
G_{0}(x) & = & \sum_{n=1}^{\infty}{a_{n}(x)}\le C_{1}\exp(-c_{1}|x|)+C_{2}\exp(-c_{2}|x|^{2})\\
 & \le & A\exp(-B|x|).
\end{eqnarray*}
\end{proof}

\subsection{Brownian Motion}

Let us consider another concrete example of a Markov process. Namely,
denote $B(t)$, $t\ge0$ the Brownian motion in $\X$. The generator
of this process is the Laplace operator $\Delta$ considered in a
proper Banach space $E$. As above we are interested in studying the
expectation of the random variable 
\[
Y(f)=\int_{0}^{\infty}f(B(t))\,\mathrm{d}t
\]
for certain class of functions $f:\X\to\R$. To this end, we introduce
the following class of functions 
\[
CL(\X)=\{f:\X\rightarrow\R:f\;\text{is continuous, bounded and belongs to}\;L_{1}(\X)\}.
\]
It is a Banach space with the norm $\|f\|_{CL}:=\|f\|_{\infty}+\|f\|_{1}$,
where $\|\cdot\|_{\infty}$ is the supremum norm and $\|\cdot\|_{1}$
is the norm in $L_{1}(\X)$.
\begin{prop}
Let $d\geq3$ be given. The Green measure of Brownian motion is 
\[
\G(x,dy)=G_{0}(x-y)\,\mathrm{d}y=\frac{C(d)}{|x-y|^{d-2}}\,\mathrm{d}y.
\]
\end{prop}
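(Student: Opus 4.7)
The plan is to reduce everything to an explicit integration of the Gaussian heat kernel in the time variable, exploiting the fact that Brownian motion has an explicit transition density.

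First I would recall that the transition probability of Brownian motion (with generator $\Delta$, as stipulated) admits the density
\[
p_t(x,y) = \frac{1}{(4\pi t)^{d/2}} \exp\!\left(-\frac{|x-y|^{2}}{4t}\right),\qquad t>0,\ x,y\in\X,
\]
so that $P_t(x,dy) = p_t(x,y)\,dy$. Then I would apply Tonelli's theorem (legitimate because $p_t\ge 0$) to interchange the $t$-integral with the spatial integral appearing in the definition of the Green measure. This immediately gives the representation $\G(x,dy) = G_0(x-y)\,dy$ with
\[
G_0(x-y) = \int_0^\infty p_t(x,y)\,dt = \int_0^\infty \frac{1}{(4\pi t)^{d/2}} \exp\!\left(-\frac{|x-y|^{2}}{4t}\right)\,dt.
\]

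Next I would evaluate this one-dimensional integral. Setting $r:=|x-y|$ and making the change of variables $u=r^{2}/(4t)$, hence $t=r^{2}/(4u)$ and $dt=-r^{2}/(4u^{2})\,du$, the integral becomes
\[
G_0(r) \;=\; \frac{1}{4\pi^{d/2}}\,\frac{1}{r^{d-2}}\int_0^{\infty} u^{\,d/2-2}\,e^{-u}\,du \;=\; \frac{\Gamma(d/2-1)}{4\pi^{d/2}}\cdot\frac{1}{r^{d-2}},
\]
so that $C(d) = \Gamma(d/2-1)/(4\pi^{d/2})$. The last step to check is the convergence of both integrals: for the spatial change of variables the Gamma integral is finite iff $d/2-1>0$, i.e., $d\ge 3$; for small $t$ the Gaussian factor $\exp(-r^{2}/4t)$ ensures absolute convergence at the lower endpoint (when $x\ne y$), while for large $t$ the factor $t^{-d/2}$ gives convergence precisely when $d\ge 3$. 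Thus the assumption $d\ge 3$ is used exactly where one expects.

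There is no real obstacle in this argument — the computation is classical and amounts to recognising that $G_0$ is the fundamental solution of $-\Delta$ in $\X$. As a sanity check I would note that, using $\Gamma(d/2)=(d/2-1)\Gamma(d/2-1)$, the constant can be rewritten as $C(d)=\Gamma(d/2)/\bigl(2(d-2)\pi^{d/2}\bigr)=1/\bigl((d-2)\omega_{d-1}\bigr)$, which matches the well-known Newtonian potential and is consistent with the general relation $-LV=f$ from \eqref{FS} applied to $L=\Delta$.
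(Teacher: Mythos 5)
Your argument is correct, and the constant $C(d)=\Gamma(d/2-1)/(4\pi^{d/2})=1/\bigl((d-2)\omega_{d-1}\bigr)$ is the right one for the generator $\Delta$ (heat kernel $(4\pi t)^{-d/2}e^{-|x-y|^2/(4t)}$). However, your route differs from the paper's. You work directly from the definition $\G(x,B)=\int_0^\infty P_t(x,B)\,\mathrm{d}t$, use Tonelli, and evaluate $\int_0^\infty p_t(x,y)\,\mathrm{d}t$ by the substitution $u=|x-y|^2/(4t)$, so the identity $\G(x,\mathrm{d}y)=C(d)|x-y|^{2-d}\,\mathrm{d}y$ and the role of $d\ge3$ (convergence at $t=\infty$, equivalently $\Gamma(d/2-1)<\infty$) come out of an explicit computation; this is more self-contained and actually produces the constant. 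The paper instead takes the Newtonian kernel for granted as the fundamental solution of $-\Delta$ via relation (\ref{FS}), $E^x[Y(f)]=-\Delta^{-1}f(x)=\int_{\X}C(d)|x-y|^{2-d}f(y)\,\mathrm{d}y$, and devotes the proof to a different point: splitting the integral over $\{|x-y|\le1\}$ and $\{|x-y|>1\}$ to get the bound $C_1\|f\|_\infty+C_2\|f\|_{1}\le C\|f\|_{CL}$, i.e.\ showing that every $f\in CL(\X)$ is integrable with respect to the Green measure, which identifies the natural class of admissible functions. So your proof establishes the stated formula more directly, while the paper's proof buys the quantitative continuity of $f\mapsto\int_{\X}f(y)\,\G(x,\mathrm{d}y)$ on $CL(\X)$; a complete account would ideally combine both: your computation to justify the formula and the constant, plus the paper's splitting estimate if one also wants the class of admissible $f$.
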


\begin{proof}
Note that due to (\ref{FS}) we have 
\[
E^{x}[Y(f)]=-\Delta^{-1}f(x)=\int_{\X}C(d)\frac{f(y)}{|x-y|^{d-2}}\,\mathrm{d}y.
\]
Then 
\begin{align*}
\left|\int_{\X}\frac{f(y)}{|x-y|^{d-2}}\,\mathrm{d}y\right| & \leq\left|\,\int_{|x-y|\leq1}\frac{f(y)}{|x-y|^{d-2}}\,\mathrm{d}y\right|+\left|\,\int_{|x-y|>1}\frac{f(y)}{|x-y|^{d-2}}\,\mathrm{d}y\right|\\
 & \le C_{1}\|f\|_{\infty}+C_{2}\|f\|_{L_{1}(\X)}\\
 & \leq C\|f\|_{CL},
\end{align*}
where we have used the local integrability in $y$ of $|x-y|^{2-d}$.
It means that every function from $CL(\X)$ is integrable with respect
to the Green measure.
\end{proof}
\begin{rem}
In a forthcoming paper we will investigate the additive functionals
for time change Markov processes. More precisely, let $X(t)$, $t\ge0$
be a Markov process in $\X$ with generator $L$ and denote by $\mu_{t}(\mathrm{d}x)$
the marginal distribution of $X(t)$. That is, $\mu_{t}$ is the solution
of the Fokker-Planck equation 
\[
\frac{\partial\mu_{t}}{\partial t}=L^{\ast}\mu_{t}.
\]
In addition, Assume that an inverse subordinator $E(t),$ $t\ge0$
is given and consider random time change 
\[
Y(t):=X(E(t)),\quad t\ge0.
\]
It is known that the marginal distribution $\nu_{t}$ of $Y(t)$ holds
a subordination formula, see \cite{KKdS19}
\[
\nu_{t}(\mathrm{d}x)=\int_{0}^{\infty}D_{t}(\tau)\mu_{\tau}(\mathrm{d}x)\,\mathrm{d}\tau,
\]
where $D_{t}(\tau)$ is the density distribution of $E(t)$. If $P_{t}(x,\mathrm{d}y)$
is the transition probability of $X(t)$, then the Green measure of
$Y(t)$ is (heuristically) given by 
\[
G(x,\mathrm{d}y)=\int_{0}^{\infty}\int_{0}^{\infty}P_{\tau}(x,\mathrm{d}y)G_{t}(\tau)\,\mathrm{d}\tau\,\mathrm{d}t.
\]
But it is not hard to see that this definition leads to a contradiction
and it has to be modified. More precisely, it has to e renormalized
in such a way that then we are able to study the integral functionals
\[
\int_{0}^{\infty}f(Y(t))\,\mathrm{d}t
\]
for a proper class of functions $f:\X\longrightarrow\R$. For the
details, see \cite{KdS20}.
\end{rem}

\subsection*{Acknowledgments}

This work has been partially supported by Center for Research in Mathematics
and Applications (CIMA) related with the Statistics, Stochastic Processes
and Applications (SSPA) group, through the grant UIDB/MAT/04674/2020
of FCT-Funda{\c c\~a}o para a Ci{\^e}ncia e a Tecnologia, Portugal.

\end{document}